\newtheorem{theorem}{Theorem}
\newtheorem{remark}[theorem]{Remark}
\newtheorem{lemma}[theorem]{Lemma}
\numberwithin{theorem}{section}
\numberwithin{equation}{section}
\title{Sectorial Paley-Wiener Theorem}
\author{Armen Vagharshakyan\\
\small\textit{Institute of Mathematics NAS, Armenia}}
\date{}
\begin{document}
\maketitle
\begin{abstract}
By correcting, simplifying and extending a result of Morimoto, we prove a Paley-Wiener type theorem for functions of exponential type in a sector. It serves as a sectorial analogue of Polya's theorem on the indicator of entire functions and improves a result of Dzhrbashyan and Avetisyan by finding the maximal convex set of analytic continuation inside a sector.
    \\\indent
MSC Codes: Primary: 30E20,\; Secondary: 30D10,\;30D15.
\end{abstract}
\section{ Introduction.}
A Paley–Wiener type theorem is any theorem that relates decay properties of a function or distribution at infinity with analyticity of its Fourier transform. The first of such theorems was proved by Paley and Wiener \cite{PW} for certain entire functions of exponential type:
\begin{theorem}[Paley-Wiener]\label{PW}
The set of entire functions $f$ of exponential type at most $h,$ that are square-integrable on the real axis, coincides with the set of functions $f$ that allow the integral representation
\begin{equation*}
    f(z)=\int_{-h}^{h}e^{iz\omega}g(\omega)d\omega,\quad \text{for }g\in L_2(-h,h).
\end{equation*}
\end{theorem}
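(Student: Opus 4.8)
The plan is to prove the two set-theoretic inclusions separately; the inclusion ``$\subseteq$'' is the substantial one.

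\emph{The inclusion ``$\supseteq$''.} Given the representation with $g\in L_2(-h,h)\subseteq L_1(-h,h)$, the integrand $e^{iz\omega}g(\omega)$ is entire in $z$ and dominated, uniformly for $z$ in compact sets, by $e^{h|\operatorname{Im}z|}|g(\omega)|\in L_1(-h,h)$; hence $f$ is continuous, and Morera's theorem together with Fubini's theorem (using $\oint_{\partial\Delta}e^{iz\omega}\,dz=0$ for every triangle $\Delta$) shows $f$ is entire. The same domination gives $|f(z)|\le\|g\|_{L_1(-h,h)}e^{h|\operatorname{Im}z|}\le\sqrt{2h}\,\|g\|_{L_2(-h,h)}e^{h|z|}$, so $f$ has exponential type at most $h$; and on the real axis $f$ is a fixed multiple of the Fourier transform of the $L_2$ function obtained by extending $g$ by zero, so $f|_{\mathbb R}\in L_2(\mathbb R)$ by Plancherel's theorem.

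\emph{The inclusion ``$\subseteq$'': reduction.} Suppose $f$ is entire of exponential type at most $h$ with $f|_{\mathbb R}\in L_2(\mathbb R)$, and let $g\in L_2(\mathbb R)$ be its Fourier transform, normalised so that $f(x)=\int_{\mathbb R}e^{ix\omega}g(\omega)\,d\omega$ in the $L_2$ sense. It suffices to show $g=0$ a.e.\ outside $[-h,h]$: then $f(x)=\int_{-h}^{h}e^{ix\omega}g(\omega)\,d\omega$ for a.e.\ $x$, the right-hand side is entire of exponential type at most $h$ by the first part, it agrees with $f$ on $\mathbb R$ and hence on $\mathbb C$ by the identity theorem, and $g\in L_2(-h,h)$.

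\emph{The key estimate.} The heart of the proof is the Plancherel--P\'olya type inequality $\|f(\,\cdot\,+iy)\|_{L_2(\mathbb R)}\le e^{h|y|}\|f\|_{L_2(\mathbb R)}$ for all $y\in\mathbb R$. I would obtain it by a Phragm\'en--Lindel\"of argument: the classical fact that an entire function of exponential type lying in $L_2(\mathbb R)$ is bounded on $\mathbb R$ gives $h_f(0)=h_f(\pi)\le 0$, which combined with $h_f(\pm\pi/2)\le h$ and the trigonometric convexity of the indicator (applied quadrant by quadrant) yields $h_f(\theta)\le h|\sin\theta|$; hence $e^{ihz}f(z)$ and $e^{-ihz}f(z)$ have non-positive indicator in the upper and lower half-plane respectively and are bounded on $\mathbb R$, so the half-plane Phragm\'en--Lindel\"of principle forces $|f(x+iy)|\le(\sup_{\mathbb R}|f|)e^{h|y|}$, after which the $L_2$ bound follows via the Poisson representation of bounded analytic functions on a half-plane. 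I expect this to be the main obstacle: the bare exponential-type bound $|f(z)|\le C_\varepsilon e^{(h+\varepsilon)|z|}$ does not by itself control $f$ on horizontal lines, so half-plane Phragm\'en--Lindel\"of input is indispensable. (One in fact needs the estimate only up to an $e^{\varepsilon|y|}$ factor, which slightly relaxes this step.)

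\emph{Conclusion.} Granting the estimate, fix $y\in\mathbb R$ and $\phi\in C_c^\infty(\mathbb R)$, and put $\Phi(z)=\int\phi(\omega)e^{-iz\omega}\,d\omega$, which is entire, of exponential type, and Schwartz on each horizontal line. Applying Cauchy's theorem to $z\mapsto f(z)\Phi(z-iy)$ on the rectangles $[-R,R]\times[0,y]$ and letting $R\to\infty$ (the vertical sides vanish because $f$ is bounded on the strip while $\Phi$ decays rapidly) identifies the Fourier transform of $f(\,\cdot\,+iy)$ as $e^{-y\omega}g(\omega)$. Plancherel's theorem on the line $\operatorname{Im}z=y$ then gives $\int_{\mathbb R}|g(\omega)|^2e^{-2y\omega}\,d\omega\le Ce^{2h|y|}$ for all $y\in\mathbb R$, with $C$ independent of $y$. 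Dividing by $e^{2h|y|}$ and letting $y\to+\infty$, Fatou's lemma forces $g=0$ a.e.\ on $(-\infty,-h)$; letting $y\to-\infty$ forces $g=0$ a.e.\ on $(h,\infty)$. Thus $\operatorname{supp}g\subseteq[-h,h]$, which completes the proof.
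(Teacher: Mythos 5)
This statement is the classical Paley--Wiener theorem, which the paper quotes only as background (with the citation \cite{PW}) and never proves; its own contribution is the sectorial analogue, and the only hint it gives about a proof of Theorem \ref{PW} is the remark that it follows from P\'olya's theorem via the Borel transform and the indicator diagram (see \cite{LLST}, \S 10.1). So there is no proof in the paper to compare against, and your argument must be judged on its own. It is the standard Plancherel--P\'olya route and it is essentially correct: the easy inclusion via Morera/Fubini and Plancherel is fine; the reduction to showing $\operatorname{supp}g\subseteq[-h,h]$ is fine; the chain ``bounded on $\mathbb{R}$ $\Rightarrow$ $I_f(0)=I_f(\pi)\le 0$ $\Rightarrow$ (trigonometric convexity) $I_f(\theta)\le h|\sin\theta|$ $\Rightarrow$ (half-plane Phragm\'en--Lindel\"of applied to $e^{\pm ihz}f$) $|f(x+iy)|\le Me^{h|y|}$ $\Rightarrow$ (Poisson representation) $\|f(\cdot+iy)\|_{2}\le e^{h|y|}\|f\|_{2}$'' is sound, and the contour-shift against a test function correctly identifies the transform of $f(\cdot+iy)$ as $e^{-y\omega}g(\omega)$, after which the $y\to\pm\infty$ limit kills $g$ outside $[-h,h]$. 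Two points deserve flagging. First, the ``classical fact'' that an entire function of exponential type lying in $L_{2}(\mathbb{R})$ is bounded on $\mathbb{R}$ is where a substantial part of the work is hidden; it is a genuine theorem (Plancherel--P\'olya; Boas, Thm.\ 6.7.1), and since some textbook proofs of it themselves pass through the very half-plane estimates you are trying to establish, you should either cite a non-circular proof (e.g.\ the one via subharmonicity of $|f|^{p}$) or accept it as an external input. Second, the half-plane Phragm\'en--Lindel\"of step needs the version for functions of exponential type with $I_F(\pi/2)\le 0$ (not the naive order-$<1$ version), which is exactly the same Phragm\'en--Lindel\"of machinery the paper itself uses to justify \eqref{bounded_cauchy_ntegral_formula}; in that sense your proof and the paper's sectorial argument share their key analytic tool, even though the paper's suggested derivation of Theorem \ref{PW} (through P\'olya's theorem) packages it quite differently.
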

We prove the following Paley-Wiener-type theorem for functions of exponential type in a sector:
\begin{theorem}\label{spw}
(a) Let $\alpha$ be an angle in the first quadrant,
\begin{equation}\label{alpha}
    0<\alpha<\pi/2,
\end{equation}
let $\Delta\subset\mathbb{C}$ the open sector determined by the angle $\alpha,$ 
\begin{equation}\label{s}
    \Delta:=\lbrace z\in\mathbb{C}\setminus\lbrace 0\rbrace\colon -\alpha<\arg(z)<\alpha\rbrace,
\end{equation}
let the function $f$ be analytic 
and of exponential type at most $h\geq 0$ in $\Delta,$ i.e.
for any $\epsilon>0$ let there exist a constant $C_{\epsilon}\geq 0$ such that
\begin{equation}\label{f_original}
    |f(z)|\leq C_{\epsilon}e^{(h+\epsilon)|z|},\quad \text{ for } z\in \Delta,
\end{equation}
let $f(\zeta),\; \zeta\in\partial \Delta$ denote the non-tangential limit of $f$ at $\zeta$ (see remark \ref{boundary}),
\newline
(b) let $\Gamma$  be the curve parameterized by
\begin{align}\label{gamma}
&
\gamma\colon\mathbb{R}\rightarrow\mathbb{C},\nonumber
\\
&
\gamma(t)=p-ie^{i\alpha}|t|,\quad\text{for }t\in (-\infty,0],\nonumber
\\
&
\gamma(t)=p+ie^{-i\alpha}|t|,\quad\text{for }t\in (0,+\infty],
\end{align}
where $p$ is a real number satisfying
\begin{equation}\label{p}
p\cos(\alpha)<-h
\end{equation}
(see illustration \ref{fig_gamma} on page \pageref{fig_gamma}),
\begin{figure}\label{fig_gamma}
\begin{tikzpicture}[scale=1]
\draw[dashed,->] (-3.5,0) -- (2.5,0); 
\draw[dashed,->] (0,-4) -- (0,4); 
\draw[fill=black] (-3,0) circle (0.1cm); 
\draw[thick,->] (-3,0) -- (1,4);
\draw[thick] (-3,0) -- (1,-4);
\node at (-3.2,0.4) {$p$}; 
\node at (-1,2.3) {$\Gamma$}; 
\node at (-1,-2.3) {$\Gamma$}; 
\fill[pattern=north west lines, pattern color=black]
(0,0) 
-- 
(1.4,-1.4) arc[start angle=-45, end angle=45,radius=2cm] 
-- 
(1.4,1.4)
;
\node at (2,0.9) {$\Delta$}; 
\draw[-latex] (0:1) arc[start angle=0, end angle=45,radius=1cm]  ;
\node at (1.2,0.4) {$\alpha$}; 
\end{tikzpicture}
\caption{ The curve $\Gamma.$}
\end{figure}
\newline
(c) for $|\theta|\leq \alpha$ let $I_f(\theta)$ denote the indicator of $f$ in direction $\theta,$ namely 
\begin{equation}\label{indicator}
    I_f(\theta):=\limsup_{s\rightarrow +\infty}\frac{\ln\left|f\left(se^{i\theta}\right)\right|}{s},
\end{equation}
and correspondingly let $\Omega_{\theta}$ denote the half-plane,
\begin{equation}\label{omega_directional}
    \Omega_{\theta}:=\lbrace \omega\colon Re\left(\omega e^{i\theta}\right)<-I_f(\theta)\rbrace,
\end{equation}
let $g_{\theta}$ be the directional Laplace transform,
defined on the half-plane $\Omega_{\theta}$ by
\begin{equation}\label{g_directional}
g_{\theta}(\omega):=\frac{1}{2\pi i}
\int_{e^{i\theta}[0,+\infty)}
f(\zeta)e^{\omega \zeta}
d\zeta,\quad \text{for }\omega\in\Omega_{\theta},
\end{equation}
and let $g$ be the concatenated Laplace transform of the function $f;$ that is,
$g$ is defined on the union $\Omega$ of all half-planes $\Omega_{\theta},$
\begin{equation}\label{omega}
    \Omega:=\bigcup_{\theta\colon e^{i\theta}[0,+\infty)\subset \bar{\Delta}}
    \Omega_{\theta},
\end{equation}
in terms of directional Laplace transforms $g_{\theta}$ as follows:
\begin{equation}\label{g}
    g(\omega):=g_{\theta}(\omega),\quad \text{for }\omega\in \Omega_{\theta}\subset \Omega.
\end{equation}
(We will prove in lemma \ref{well-defined} that the function $g$ is well-defined by formula \eqref{g}.)
Then the following Fourier inversion formula holds:
\begin{equation}\label{morimoto}
     f(z)=\int_{\Gamma} g(\omega)e^{-\omega z}d\omega.
 \end{equation}
\end{theorem}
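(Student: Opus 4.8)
\emph{Proof strategy.}
The plan is to set $\Phi(z):=\int_{\Gamma}g(\omega)e^{-\omega z}\,d\omega$, to verify that $\Phi$ is holomorphic on $\Delta$, and then to prove $\Phi(x)=f(x)$ only on the positive semi-axis $(0,+\infty)\subset\Delta$; since $\Delta$ is connected and $f,\Phi$ are both holomorphic on it, the identity theorem upgrades this to $\Phi\equiv f$ on $\Delta$, which is \eqref{morimoto}. Thus the work splits into (i) locating $\Gamma$ inside $\Omega$ and controlling $\Phi$, and (ii) an explicit computation along the positive axis.

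For part (i), write $\Gamma=\Gamma^{-}\cup\Gamma^{+}$ for the two rays of \eqref{gamma}, $\Gamma^{+}=\{p+ie^{-i\alpha}r:r\ge0\}$ and $\Gamma^{-}=\{p-ie^{i\alpha}r:r\ge0\}$. On $\Gamma^{+}$ one computes $\mathrm{Re}(\omega e^{i\alpha})=\mathrm{Re}(pe^{i\alpha}+ir)=p\cos\alpha$, which by \eqref{p} and $I_f(\alpha)\le h$ (a consequence of \eqref{f_original}) is $<-h\le-I_f(\alpha)$; hence $\Gamma^{+}\subset\Omega_{\alpha}$, and symmetrically $\Gamma^{-}\subset\Omega_{-\alpha}$, so $\Gamma\subset\Omega$ and, by \eqref{g}, $g\equiv g_{\pm\alpha}$ on $\Gamma^{\pm}$. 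Plugging $\mathrm{Re}(\omega e^{\pm i\alpha})=p\cos\alpha$ into \eqref{g_directional} and using \eqref{f_original} with $\epsilon$ small enough that $h+\epsilon+p\cos\alpha<0$ shows $g$ is bounded on $\Gamma$, say $|g|\le M$. Finally, for $z=|z|e^{i\varphi}$ with $|\varphi|<\alpha$ a short expansion gives $\mathrm{Re}(\omega z)=p|z|\cos\varphi+r|z|\sin(\alpha\mp\varphi)$ on $\Gamma^{\pm}$, and $\sin(\alpha\mp\varphi)>0$ because $0<\alpha\mp\varphi<2\alpha<\pi$; thus $|g(\omega)e^{-\omega z}|\le Me^{-\mathrm{Re}(\omega z)}$ decays exponentially along $\Gamma$, locally uniformly in $z\in\Delta$, so the truncated integrals converge locally uniformly and $\Phi$ is holomorphic on $\Delta$.

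For part (ii), fix $x>0$, split $\Phi(x)=\int_{\Gamma^{-}}g_{-\alpha}e^{-\omega x}d\omega+\int_{\Gamma^{+}}g_{\alpha}e^{-\omega x}d\omega$, and insert \eqref{g_directional}; the bound from (i) together with $x\sin\alpha>0$ makes the resulting double integrals absolutely convergent, so by Fubini I integrate in $\omega$ first. For $\zeta\in e^{i\alpha}[0,\infty)$ the inner integral is elementary, $\int_{\Gamma^{+}}e^{\omega(\zeta-x)}\,d\omega=-\,e^{p(\zeta-x)}/(\zeta-x)$ (the integral converging since $\mathrm{Re}(ie^{-i\alpha}(\zeta-x))=-x\sin\alpha<0$), and similarly $\int_{\Gamma^{-}}e^{\omega(\zeta-x)}\,d\omega=+\,e^{p(\zeta-x)}/(\zeta-x)$ for $\zeta\in e^{-i\alpha}[0,\infty)$. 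Since $\partial\Delta$, positively oriented, runs inward along $e^{i\alpha}[0,\infty)$ and outward along $e^{-i\alpha}[0,\infty)$, these two pieces recombine into a single Cauchy-type integral,
\[
\Phi(x)=\frac{e^{-px}}{2\pi i}\int_{\partial\Delta}\frac{f(\zeta)\,e^{p\zeta}}{\zeta-x}\,d\zeta .
\]
Now $F(\zeta):=f(\zeta)e^{p\zeta}$ is holomorphic in $\Delta$ and, by \eqref{f_original} and \eqref{p}, satisfies $|F(\zeta)|\le C_\epsilon e^{(h+\epsilon+p\cos\alpha)|\zeta|}\to0$ both on $\partial\Delta$ and on the arcs $|\zeta|=R$ inside $\bar\Delta$ (using $p<0$). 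Applying the Cauchy integral formula to $F$ on the truncated sectors $\{\rho<|\zeta|<R\}\cap\Delta$ and letting $\rho\downarrow0$, $R\uparrow\infty$ — the outer arc contributing $O(e^{(h+\epsilon+p\cos\alpha)R})\to0$ and the inner arc $O(\rho)\to0$, since $F$ is bounded near the vertex — yields $\frac{1}{2\pi i}\int_{\partial\Delta}\frac{F(\zeta)}{\zeta-x}\,d\zeta=F(x)$, whence $\Phi(x)=e^{-px}F(x)=f(x)$.

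I expect the main obstacle to be this last step: justifying the Cauchy integral formula for $F=fe^{p\zeta}$ on the \emph{unbounded} sector $\Delta$. Two points need care. First, the arc at infinity must be negligible — this is precisely the role of \eqref{p}: the factor $e^{p\zeta}$ has to dominate the exponential type $h$ uniformly over the whole closed sector, and $p\cos\alpha<-h$ supplies exactly the needed margin. Second, one must know that $F$ is genuinely reproduced by the Cauchy integral of its non-tangential boundary values, which is where the boundary behaviour recorded in Remark \ref{boundary} (together with a standard approximation on each truncated sector, e.g.\ shrinking the opening angle and passing to the limit by dominated convergence) enters. By comparison, the Fubini interchange in part (ii) and the identity $g\equiv g_{\pm\alpha}$ on $\Gamma$ (from Lemma \ref{well-defined}) should be routine once the uniform bounds of part (i) are in hand.
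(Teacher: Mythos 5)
Your argument is correct and is essentially the paper's proof run in reverse: the paper starts from the Cauchy integral formula for $f(\zeta)e^{p\zeta}$ on the unbounded sector (justified via \eqref{p} and Phragm\'en--Lindel\"of), expands the kernel $e^{p(\zeta-z)}/(\zeta-z)$ as an exponential integral over the two rays of $\Gamma$, and applies Fubini to reach \eqref{morimoto} directly for all $z\in\Delta$, whereas you start from $\int_{\Gamma}g(\omega)e^{-\omega z}\,d\omega$, undo the same Fubini step on the positive semi-axis, and finish with the identity theorem. Every key ingredient you list --- $\Gamma^{\pm}\subset\Omega_{\pm\alpha}$, the uniform bound on $g$ along $\Gamma$, the identity recovering the Cauchy kernel, and the Cauchy formula on the truncated sector with vanishing arcs --- matches the paper's formulas \eqref{cauchy_to_fourier_1}--\eqref{fourierinversion} and Lemmas \ref{g_estimate}, \ref{lemma_fubini}, so there is no gap.
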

The outline of proof of theorem \ref{spw} is as follows: we first express $f$ via a Cauchy integral formula \eqref{cauchy_integral_formula}; then we rewrite the Cauchy integral formula \eqref{cauchy_integral_formula} as a Fourier inversion formula \eqref{fourierinversion}; finally, in remark \ref{fasg} we rewrite the internal integrals appearing in the Fourier inversion formula \eqref{fourierinversion} in a unified way, namely, as values of the concatenated Laplace transform $g.$
\begin{remark}\label{remark_sector}
We remark on the formulation of theorem \ref{spw} that the reasons we take the set $\Delta$ to be a sector are the following:
with each point $e^{i\theta}\in \Delta$ the set $\Delta$ contains the whole ray $e^{i\theta}(0,+\infty),$ so that we can introduce the notion of indicator \eqref{indicator} properly; additionally, the set of angles
$
    \lbrace \theta \colon e^{i\theta}(0,+\infty)\subset\bar{\Delta}\rbrace
$
is connected, so that we can apply the Cauchy integral formula  \eqref{bounded_cauchy_ntegral_formula}.
\end{remark}
\begin{remark}\label{remark_halfplane}
One can eliminate restriction \eqref{alpha} on the angle $\alpha$ and obtain results similar to theorem \ref{spw} for wider, more general sectors $\Delta$ if one splits the sector into narrower ones.
\end{remark}
We complement theorem \ref{spw} by relating the indicator of $f$ to the natural domain of analyticity of $g$ by the following theorem:
\begin{theorem}\label{nda}
Under notations of theorem \ref{spw}, for $|\theta|<\alpha,$ the concatenated Laplace transform $g$ may not be analytically continued to any half-plane $\Omega^{\prime}$  contains the half-plane $\Omega_{\theta}$ properly,
$
     \Omega_{\theta}\subsetneq \Omega^{\prime}.
 $
\end{theorem}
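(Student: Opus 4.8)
The plan is to argue by contradiction, feeding a hypothetical extra analyticity of $g$ into the Fourier inversion formula \eqref{morimoto}: I would deform the bounded part of the contour $\Gamma$ across the line $\partial\Omega_\theta$ and read off an improved exponential bound for $f$ in the direction $\theta$, contradicting the definition \eqref{indicator} of $I_f(\theta)$. Fix $\theta$ with $|\theta|<\alpha$ and put $\tau:=I_f(\theta)$. If $\tau=-\infty$ then $\Omega_\theta=\mathbb{C}$ by \eqref{omega_directional} and the assertion is vacuous, so assume $\tau\in\mathbb{R}$; note $\tau\le h$ by \eqref{f_original}. Suppose $g$ extends to an analytic function $h_0$ on an open half-plane $\Omega'$ with $\Omega_\theta\subsetneq\Omega'$. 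A brief geometric remark shows the bounding line of $\Omega'$ must be parallel to that of $\Omega_\theta$: otherwise the real-linear functional defining $\Omega'$ would be unbounded above on $\Omega_\theta$, contradicting $\Omega_\theta\subset\Omega'$, while the opposite parallel half-plane plainly cannot contain $\Omega_\theta$. Hence $\Omega'=\{\omega:\operatorname{Re}(\omega e^{i\theta})<-\tau+\delta\}$ for some $\delta>0$, and by the identity theorem $h_0=g$ on the connected component of $\Omega\cap\Omega'$ that contains $\Omega_\theta$.

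The decisive elementary point is that the vertex $p$ of $\Gamma$ lies strictly inside $\Omega_\theta$: since $p<0$ and $0<\cos\alpha\le\cos\theta$, hypothesis \eqref{p} gives $\operatorname{Re}(pe^{i\theta})=p\cos\theta\le p\cos\alpha<-h\le-\tau$. I would then fix $\varepsilon\in(0,\delta)$, set $c:=-\tau+\delta-\varepsilon\in(-\tau,\,-\tau+\delta)$, and look at the sub-arc $\Gamma_0:=\{\omega\in\Gamma:\operatorname{Re}(\omega e^{i\theta})<c\}$. From \eqref{gamma} one checks that $\operatorname{Re}(\omega e^{i\theta})$ increases strictly, with a positive slope, along each of the two rays of $\Gamma$ as one leaves $p$; hence $\Gamma_0$ is a bounded, connected sub-arc of $\Gamma$ containing $p$, with endpoints $q_1,q_2$ on the line $\{\operatorname{Re}(\omega e^{i\theta})=c\}$. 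As $\Gamma_0$ is connected and meets $\Omega_\theta$, it lies in the component of $\Omega\cap\Omega'$ on which $h_0=g$; and since $c<-\tau+\delta$, both $\Gamma_0$ and the straight segment $\widetilde\Gamma_0:=[q_2,q_1]$ lie in the convex --- hence simply connected --- half-plane $\Omega'$, on which $h_0$ is analytic.

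Now replace $\Gamma$ by $\widetilde\Gamma:=(\Gamma\setminus\Gamma_0)\cup\widetilde\Gamma_0$, and set $\widetilde g:=g$ on $\Gamma\setminus\Gamma_0\subset\Omega$ and $\widetilde g:=h_0$ on $\widetilde\Gamma_0$. Splitting \eqref{morimoto} at $q_1,q_2$, using $g=h_0$ along $\Gamma_0$, and applying Cauchy's theorem in $\Omega'$ to the closed contour $\Gamma_0\cup(-\widetilde\Gamma_0)$ --- on which $\omega\mapsto h_0(\omega)e^{-\omega z}$ is analytic --- yields $f(z)=\int_{\widetilde\Gamma}\widetilde g(\omega)e^{-\omega z}\,d\omega$ for every $z\in\Delta$. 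By construction $\operatorname{Re}(\omega e^{i\theta})\ge c$ on all of $\widetilde\Gamma$, so for $z=se^{i\theta}$, $s>0$, we have $|e^{-\omega z}|=e^{-s\operatorname{Re}(\omega e^{i\theta})}\le e^{-cs}$ on $\widetilde\Gamma$, while along the two unbounded tails of $\widetilde\Gamma$ the exponent $\operatorname{Re}(\omega e^{i\theta})$ grows linearly; together with the boundedness of $g$ along $\Gamma$ (which follows from Theorem \ref{spw}, the defining integral converging uniformly on the rays of $\Gamma$ thanks to \eqref{p}) and the continuity of $h_0$ on the compact arc $\widetilde\Gamma_0$, this gives $|f(se^{i\theta})|\le C\,e^{-cs}$ for all large $s$. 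Then \eqref{indicator} forces $I_f(\theta)\le-c=\tau-\delta+\varepsilon<\tau=I_f(\theta)$, the required contradiction.

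I expect the main obstacle to be keeping the deformation both valid and useful: the modified arc must stay inside $\Omega'$ so that Cauchy's theorem applies with $h_0$, yet lie on the far side of $\partial\Omega_\theta$ so that the bound on $|e^{-\omega z}|$ genuinely improves; and one must be certain the continuation $h_0$ controls $g$ along \emph{all} of $\Gamma_0$, which is exactly why it is essential to use the connected arc $\Gamma_0$ through the vertex $p$, and hence why hypothesis \eqref{p} --- which forces $p\in\Omega_\theta$ for every $|\theta|<\alpha$ --- is indispensable. After that, only Cauchy's theorem and a one-line exponential estimate remain. The argument also makes the analogy with Polya's theorem transparent: $\Omega$ is the complement of a convex set $K$, each line $\partial\Omega_\theta$ is a supporting line of $K$, and the theorem says no such supporting line can be pushed outward.
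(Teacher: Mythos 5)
Your proposal is correct and follows essentially the same route as the paper: assume a proper half-plane extension $\Omega'$, replace the vertex portion of $\Gamma$ near $p$ by a chord lying strictly between $\partial\Omega_\theta$ and $\partial\Omega'$ (your $[q_2,q_1]$ is the paper's segment $[q,r]$), justify the deformation by Cauchy's theorem in $\Omega'$ together with the boundedness of $g$ on the rays, and read off a decay rate $e^{-cs}$ with $c>-I_f(\theta)$ that contradicts the definition of the indicator. Your added observations --- that $\partial\Omega'$ must be parallel to $\partial\Omega_\theta$ and that \eqref{p} forces $p\in\Omega_\theta$ --- are details the paper leaves implicit in its figure, but they do not change the argument.
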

As shown in section \ref{nda_proof}, theorem \ref{nda} follows from theorem \ref{spw}.

This paper was motivated by the following remark: 
\begin{remark}
In \cite{YS} the authors Yoshino and Suwa claim that for a function $f$ of exponential type in the right open half-plane the following Paley-Wiener-type representation holds:
\begin{equation}\label{ys}
    f(z)=\int_{\mathbb{C}}e^{-\omega z}d\mu(\omega),
\end{equation}
where $\mu$ is a measure defined in the complex plane $\mathbb{C},$ whose support is bounded from above, below, and the right.
Their claim is based on proposition 5.2 page 95 of Morimoto's article \cite{M}.
In turn, in lemma 5.2 page 93 of the same article \cite{M} the following is claimed: if a function is holomorphic on the set $W(1)=\lbrace \omega\colon Im(\omega)>k_2+\epsilon\rbrace,$ then it is holomorphic on a larger set 
$\lbrace \omega\colon  Im(\omega)>k_2\rbrace;$
an obviously wrong claim.
This mistake affects the construction of the Laplace transform in definition 5.3 page 95 of \cite{M} and ultimately affects the proof of proposition 5.2 of \cite{M}.

However, a closer inspection of the article \cite{M} shows that under proper conditions \eqref{f_original} imposed on the function $f$ its boundary values exist almost everywhere (see remark \ref{boundary}), and consequently the Laplace transform may be defined explicitly (see formula \ref{g_directional}) instead of having to rely on the elaborate technique of factor spaces of holomorphic functions. 

In this paper we use this latter approach to derive a representation of the function $f$ in terms of an integral of exponents.
\end{remark}
\begin{remark}
After this article was completed, the author came across the article \cite{DA} of Dzhrbashyan and Avetisyan where, in its paragraph 1, theorem \ref{spw} was proved in quite a similar way to our proof, some eighteen years before Morimoto's article \cite{M}. 
\end{remark}
The following two remarks compare our results \ref{spw},\ref{nda} to those of Morimoto \eqref{ys} and Polya \ref{P}.
\begin{remark}
Theorem \ref{spw} extends the result \eqref{ys} in the following ways: our claim holds for a general sector not only for half-plane; and instead of a measure $\mu$ integrated over the complex plane $\mathbb{C},$ we use a function $g$ integrated over a curve $\Gamma;$ 
\end{remark}
\begin{remark}
We extend the result \eqref{ys} in yet another way: we complement theorem \ref{spw} by theorem \ref{nda} regarding the natural domain of analyticity of the function $g.$ In fact theorems \ref{spw},\ref{nda} can be viewed as an analogue of the Polya's theorem (see \cite{P},\cite{B} or theorem 5.5 in \cite{L}) for a sector:
\begin{theorem}[Polya]\label{P}
Let $f$ be an entire function of exponential type.
Denote by $K\subset \mathbb{C}$ the convex set whose support function $k(\theta)$ is determined by $f$'s indicator as follows:
\begin{equation}
    k(\theta)=I_f(\theta)
\end{equation}
Then $f$ can be restored by
\begin{equation}
    f(z)=\frac{1}{2\pi i}\int_{\Gamma}g(t)e^{-zt}dt,
\end{equation}
where $\Gamma$ is a closed contour containing the set $K,$ and
$g$ is the Laplace transform of $f.$ Additionally, $K$ is the smallest convex set such that $g$ is analytic in $\mathbb{C}\setminus K.$
\end{theorem}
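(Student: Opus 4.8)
The plan is to deduce Theorem \ref{P} from the sectorial results \ref{spw} and \ref{nda} by letting the sector exhaust the whole plane. Since $f$ is entire, it is analytic and of exponential type along \emph{every} ray, so the indicator $I_f(\theta)$ of \eqref{indicator} is defined for all $\theta\in(-\pi,\pi]$ and, the type being finite, is bounded; consequently the directional Laplace transforms $g_\theta$ of \eqref{g_directional} and their half-planes $\Omega_\theta$ of \eqref{omega_directional} are available in every direction. Using the splitting device of Remark \ref{remark_halfplane} to cover a neighborhood of each direction by an admissible sector (half-angle $<\pi/2$), I would assemble the concatenated transform $g$ on $\Omega:=\bigcup_{\theta\in(-\pi,\pi]}\Omega_\theta$ as in \eqref{g}. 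The first substantive step is to identify the complement of $\Omega$: by \eqref{omega_directional}, $\omega\notin\Omega$ means $\mathrm{Re}(\omega e^{i\theta})\ge -I_f(\theta)$ for every $\theta$, i.e. $\mathrm{Re}\bigl((-\omega)e^{i\theta}\bigr)\le I_f(\theta)$ for all $\theta$, which says exactly that $-\omega$ lies in the convex set with support function $I_f(\theta)$. Hence $\mathbb{C}\setminus\Omega$ is, up to the reflection forced by the differing sign conventions between \eqref{g_directional}--\eqref{morimoto} and the statement of Theorem \ref{P}, precisely the indicator diagram $K$; it is convex by construction and compact because $I_f$ is bounded. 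Thus $g$ is analytic on $\mathbb{C}\setminus K$.

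Next I would establish that $g$ is single-valued and decaying, and then close up the contour. Writing $f(\zeta)=\sum_{n\ge 0}a_n\zeta^n$, a term-by-term evaluation of \eqref{g_directional} along the ray $\zeta=se^{i\theta}$ shows that each $g_\theta$ coincides near infinity with the Borel series $\tfrac{1}{2\pi i}\sum_{n\ge 0}\tfrac{n!\,a_n}{(-\omega)^{n+1}}$, independent of $\theta$. Since all the $g_\theta$ agree on this common unbounded overlap, they are restrictions of a single analytic function $g$, which is therefore well-defined (the full-plane analogue of Lemma \ref{well-defined}), satisfies $g(\omega)=O(1/|\omega|)$, and extends analytically to $\infty$ on the Riemann sphere; as $K$ is compact and convex, $\widehat{\mathbb{C}}\setminus K$ is simply connected, so no monodromy issue arises. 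Because $K$ is compact and $g$ decays, the unbounded curves $\Gamma$ produced by Theorem \ref{spw} in the overlapping admissible sectors may be deformed, without introducing boundary contributions, into a single closed contour $\Gamma$ that encircles $K$ once. Collecting the sectorial inversion formulas \eqref{morimoto} and evaluating the integral term by term against the Borel series (a standard Cauchy/residue computation, $\tfrac{1}{2\pi i}\oint_\Gamma t^{-n-1}e^{-zt}\,dt=(-z)^n/n!$ with the appropriate reflection) yields $f(z)=\tfrac{1}{2\pi i}\int_\Gamma g(t)e^{-zt}\,dt$, the explicit $\tfrac{1}{2\pi i}$ being the factor already carried by $g_\theta$ in \eqref{g_directional}.

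Finally, minimality of $K$ is where Theorem \ref{nda} does the real work. For each $\theta$ the boundary line $\partial\Omega_\theta=\{\mathrm{Re}(\omega e^{i\theta})=-I_f(\theta)\}$ is exactly a supporting line of $\mathbb{C}\setminus\Omega=K$ with outer normal in direction $\theta$, and Theorem \ref{nda} asserts that $g$ admits no analytic continuation across $\partial\Omega_\theta$. Hence $g$ has a genuine singularity on the supporting line of $K$ in every direction, so any convex set off which $g$ is analytic must meet each of these supporting lines and therefore contains $K$; this makes $K$ the smallest convex set such that $g$ is analytic in $\mathbb{C}\setminus K$. I expect the main obstacle to be the passage from sectors to the full plane --- verifying that the directional data $\{g_\theta\}$ patch into one single-valued $g$ all the way around $K$, and that the several unbounded contours from the overlapping admissible sectors merge into one closed contour with no leftover boundary terms. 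Both are controlled by the two enabling facts above, the agreement of every $g_\theta$ with the common Borel series and the decay $g(\omega)=O(1/|\omega|)$; once these are secured, the identification $\mathbb{C}\setminus\Omega=K$ and the minimality via Theorem \ref{nda} follow at once.
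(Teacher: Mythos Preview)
The paper does not prove Polya's theorem at all: it is quoted inside a remark as a classical result (with references to \cite{P}, \cite{B}, and \cite{L}) against which the paper's own sectorial Theorems~\ref{spw} and~\ref{nda} are merely \emph{compared}. There is therefore no ``paper's own proof'' to set beside your attempt.

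Your program---recovering Polya from the sectorial results by covering the plane with rotated copies of admissible sectors---is not something the paper undertakes; if anything the paper gestures at the opposite logical direction (Polya $\Rightarrow$ Paley--Wiener, via \cite{LLST}). As an independent derivation your outline is reasonable. The Borel-series identification you use to glue the $g_\theta$ into a single-valued $g$ is legitimate here precisely because $f$ is entire; the paper itself points out (in the remark following Theorem~\ref{P}) that this device is unavailable in the genuine sectorial setting since $f$ need not be analytic at $0$. The step that is only asserted, not argued, is the passage from several unbounded sectorial contours to a single closed contour around $K$: the decay $g(\omega)=O(1/|\omega|)$ alone gives an integrand of order $|\omega|^{-1}$ times an exponential, and on a large connecting arc this does not obviously tend to zero without further control on $\mathrm{Re}(\omega z)$ along that arc. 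In the standard proofs one cites, this issue is bypassed by working with the Borel series over a closed contour from the start rather than routing through the unbounded $\Gamma$'s, so if you pursue your route you should expect the contour-closing to be where the real analytic effort is spent.
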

In fact, Polya's theorem implies the Paley-Wiener theorem (see \cite{LLST},section 10.1).
\end{remark}
\begin{remark}
The peculiarity of our results is the following: as stated in the very name of article \cite{M}, unlike the setting discussed in the theorems of Paley-Wiener \ref{PW} or Polya \ref{P}, the curve $\Gamma,$ that appears in the Fourier inversion formula \eqref{morimoto}, is unbounded. Additionally, unlike in the case of remark 4.1.2 in \cite{ASP}, we may not express the directional Laplace transform \eqref{g_directional} in terms of $f$'s Taylor series expansion at $0,$ simply because $f$ may not be analytic at $0.$
\end{remark}

Theorem \ref{spw} has proved to have several applications, e.g. in refining Carlson's uniqueness theorem \cite{V} and in proving trigonometric convexity for the multidimensional indicator after Ivanov \cite{MV}.

\section{ Cauchy formula.}
\begin{remark}\label{boundary}
Note that, due to condition \eqref{f_extended}, the function $f$ is bounded in a vicinity of each of the points of $\partial \Delta.$ Hence, by Fatou's theorem, the non-tangential limit of $f$ exists at a.e. point $z\in\partial \Delta.$ 
Extend the function $f$ from the open sector $\Delta$ to the closed sector $\bar{\Delta}$ in the following way: for $z\in\partial \Delta$ define $f(z)$ to be the non-tangential limit provided by Fatou's theorem if the limit exists and define $f(z)$ to be $0$ if the limit doesn't exist. Cauchy integral formula is then applicable to any bounded domain in $\bar{\Delta}.$  Additionally, the extension of the function $f$ to $\bar{\Delta},$ defined in the aforementioned way, satisfies the same estimate \eqref{f_original} on the whole closed sector $\bar{\Delta}:$ 
for any $\epsilon>0$ there exists a constant $C_{\epsilon}\geq 0$ such that
\begin{equation}\label{f_extended}
|f(z)|\leq C_{\epsilon}e^{(h+\epsilon)|z|},\quad \text{ for } z\in \bar{\Delta}.
\end{equation}
\end{remark}
Due to \eqref{f_extended},\eqref{s} and the fact that $p$ is a negative number, we can estimate
\begin{equation}\label{pl}
  \left|f(z)e^{pz}\right|
 \leq
     C_{\epsilon }e^{(h+\epsilon+p\cos(\alpha))|z|},\quad \text{for }z\in\bar{\Delta}.
\end{equation}
In particular, due to the choice \eqref{p} of point $p$ we can take $\epsilon$ small enough to guarantee that the coefficient 
\begin{equation}\label{negative}
h+\epsilon+p\cos(\alpha)
\end{equation}
in the power of the last exponent of \eqref{pl} is negative.
This is why
the Phragmen-Lindeloef maximum principle applies 
to the Cauchy integral formula for the function $f(z)e^{pz}$ in the 
sector $\bar{\Delta}.$ 
Consequently, we have
\begin{equation}\label{bounded_cauchy_ntegral_formula}
    f(z)e^{pz}
    =
    \frac{1}{2\pi i}
    \int_{\partial \Delta}
    \frac{f(\zeta)e^{p\zeta}}{\zeta-z}
    d\zeta,
    \quad \text{for }z\in \Delta,
\end{equation}
where the curve $\partial \Delta$ is oriented counterclockise, i.e. it is oriented with respect to the sector $\Delta.$
Rewrite the latter formula as
\begin{equation*}
2\pi i\cdot f(z)=\int_{\partial \Delta} \frac{f(\zeta) e^{p(\zeta-z)}}{\zeta-z}d\zeta,
 \quad \text{for }z\in \Delta.
\end{equation*}
 Split integration over the oriented curve $\partial{\Delta}$ into integration over its two rays: $e^{-i\alpha}[0,+\infty)$ and $e^{i\alpha}[0,+\infty),$ to get
\begin{equation}\label{cauchy_integral_formula}
2\pi i\cdot f(z)=
\int_{e^{-i\alpha}[0,+\infty)}
\frac{f(\zeta) e^{p(\zeta-z)}}{\zeta-z}d\zeta
-
\int_{e^{i\alpha}[0,+\infty)}
 \frac{f(\zeta) e^{p(\zeta-z)}}{\zeta-z}d\zeta,
 \quad \text{for }z\in \Delta.
\end{equation}
\section{ Fourier inversion.}
Note the following elementary formula that recovers the Cauchy kernel $1/z$ from exponential functions $z\rightarrow e^{\omega z}$ by a proper integration over parameter $\omega:$ \begin{equation}\label{cauchy_to_fourier}
    \frac{1}{z}=-\int_0^{+\infty}e^{\omega z}d\omega,\quad\text{for }Re(z)<0.
\end{equation}
Now let $\zeta\in e^{i\alpha}[0,+\infty)$ and $z\in \Delta.$ Then, by definition \eqref{s} of the sector $\Delta$ and restriction \eqref{alpha} on the angle $\alpha,$ we have $\arg(\zeta-z)\in (\alpha,\alpha+\pi).$ Hence, $\pi/2-\alpha+\arg(\zeta-z)\in (\pi/2,3\pi/2),$ or in other words
\begin{equation*}
    Re\left(ie^{-i\alpha}(\zeta-z)\right)<0.
\end{equation*}
Hence, similarly to formula \eqref{cauchy_to_fourier}, we have
\begin{equation}\label{cauchy_to_fourier_1}
    \frac{e^{p(\zeta-z)}}{\zeta-z}=-\int_{p+ie^{-i\alpha}[0,+\infty)}e^{\omega(\zeta-z)}d\omega
    ,\quad \text{for }\zeta\in e^{i\alpha}[0,+\infty),\;z\in \Delta.
\end{equation}
By an analogous argument we can also prove that
\begin{equation}\label{cauchy_to_fourier_2}
\frac{e^{p(\zeta-z)}}{\zeta-z}=-\int_{p-ie^{i\alpha}[0,+\infty)}e^{\omega (\zeta-z)}d\omega
,\quad\text{ for } \zeta\in e^{-i\alpha}[0,+\infty),\; z\in \Delta.
\end{equation}
Formulas \eqref{cauchy_to_fourier_1} and \eqref{cauchy_to_fourier_2} allow us to rewrite Cauchy's integral formula \eqref{cauchy_integral_formula} as:
\begin{align}\label{f_prefubini}
2\pi i f(z)=
-
\int_{e^{-i\alpha}[0,+\infty)}f(\zeta)
\int_{p-ie^{i\alpha}[0,+\infty)}
e^{\omega (\zeta-z)}d\omega
d\zeta
+
\nonumber\\
+
\int_{e^{i\alpha}[0,+\infty)}f(\zeta)
\int_{p+ie^{-i\alpha}[0,+\infty)}
e^{\omega (\zeta-z)}d\omega
d\zeta,
\quad \text{for }z\in \Delta.
\end{align}
The following two lemmas justify changing the order of integration in \eqref{f_prefubini}:
\begin{lemma}\label{g_estimate}
Under assumptions of theorem \ref{spw} we have
\begin{equation*}
\int_{e^{-i\alpha}[0,+\infty)}\left|f(\zeta)\right|\left|e^{\omega\zeta}\right||d\zeta|
\leq
\frac{-C_{\epsilon}}{h+\epsilon+p\cos(\alpha)}
    ,\quad \text{for }\omega\in p-ie^{i\alpha}[0,+\infty).
\end{equation*}
\end{lemma}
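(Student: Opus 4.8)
The plan is to parameterize both the integration variable $\zeta$ and the parameter $\omega$ along their respective rays and thereby reduce the claim to an elementary one-dimensional exponential integral. Write $\zeta=e^{-i\alpha}s$ with $s\in[0,+\infty)$, so that $|\zeta|=s$ and $|d\zeta|=ds$, and write $\omega=p-ie^{i\alpha}r$ with $r\in[0,+\infty)$. The first step is to compute the real part of the exponent $\omega\zeta$: multiplying out gives $\omega\zeta=pe^{-i\alpha}s-irs$, hence $Re(\omega\zeta)=ps\cos(\alpha)$, which — and this is the one point worth pausing over — does not depend on $r$. Consequently $|e^{\omega\zeta}|=e^{ps\cos(\alpha)}$ uniformly for $\omega$ ranging over the ray $p-ie^{i\alpha}[0,+\infty)$.

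Next I would bound $|f(\zeta)|$ by means of the exponential-type estimate \eqref{f_extended} from remark \ref{boundary}, which is valid on all of $\bar{\Delta}$ and in particular on the ray $e^{-i\alpha}[0,+\infty)$; this yields $|f(\zeta)|\le C_{\epsilon}e^{(h+\epsilon)s}$. Combining the two bounds, the integrand is dominated by $C_{\epsilon}e^{(h+\epsilon+p\cos(\alpha))s}$, so that
\begin{equation*}
\int_{e^{-i\alpha}[0,+\infty)}|f(\zeta)||e^{\omega\zeta}||d\zeta|\le C_{\epsilon}\int_{0}^{+\infty}e^{(h+\epsilon+p\cos(\alpha))s}\,ds .
\end{equation*}

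Finally I would invoke the choice \eqref{p} of the point $p$: after shrinking $\epsilon$ if necessary, exactly as in the discussion surrounding the quantity \eqref{negative}, the coefficient $h+\epsilon+p\cos(\alpha)$ is strictly negative, so the last integral converges and equals $-1/(h+\epsilon+p\cos(\alpha))$, which is precisely the asserted bound. I do not expect any genuine obstacle here: the only mildly delicate point is the observation that $Re(\omega\zeta)$ loses all dependence on $r$, which is what makes the estimate hold uniformly in $\omega$ along the entire ray; everything else is the routine evaluation of an absolutely convergent exponential integral.
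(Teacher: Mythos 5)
Your proposal is correct and follows essentially the same route as the paper: the paper writes $\omega'=p-\omega$ and observes $|e^{-\omega'\zeta}|=1$, which is exactly your remark that $\mathrm{Re}(\omega\zeta)=ps\cos(\alpha)$ is independent of the parameter along the ray, and then both arguments reduce to the same convergent exponential integral with negative exponent $h+\epsilon+p\cos(\alpha)$.
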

\begin{proof}
Denote
\begin{equation}\label{omega_prime}
    \omega^{\prime}=p-\omega.
\end{equation}
Then conditions 
$\omega\in p-ie^{i\alpha}[0,+\infty)$ 
and 
$\zeta\in e^{-i\alpha}[0,+\infty)$
imply that
\begin{equation}\label{omega_prime_property}
    \left|e^{-\omega^{\prime}\zeta}\right|=1.
\end{equation}
We estimate
\begin{align*}
&
    \int_{e^{-i\alpha}[0,+\infty)}\left|f(\zeta)\right|\left|e^{\omega\zeta}\right||d\zeta|
\overset{\eqref{f_extended},\eqref{omega_prime},\eqref{omega_prime_property}}{\leq}
C_{\epsilon}\int_{e^{-i\alpha}[0,+\infty)}e^{(h+\epsilon)|\zeta|}e^{ pRe(\zeta)}|d\zeta|=
\\
&
=
C_{\epsilon}\int_{e^{-i\alpha}[0,+\infty)}e^{(h+\epsilon+ p\cos(\alpha))|\zeta|}d|\zeta|.
\end{align*}
From this estimate lemma \ref{g_estimate} follows.
\end{proof}
\begin{lemma}\label{lemma_fubini}
We have
\begin{equation*}
    \int_{p-ie^{i\alpha}[0,+\infty)}
\left|e^{-\omega z}\right|
\left|d\omega\right|=\frac{-\left|e^{-pz}\right|}{Re\left(ie^{i\alpha}z\right)},\quad\text{for }z\in\Delta.
\end{equation*}
\end{lemma}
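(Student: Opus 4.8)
The plan is to reduce the contour integral to an elementary one-dimensional real integral by explicitly parameterizing the ray. First I would write a generic point of the ray $p-ie^{i\alpha}[0,+\infty)$ as $\omega=p-ie^{i\alpha}s$ with $s\in[0,+\infty)$; then $|d\omega|=ds$, and since
\begin{equation*}
-\omega z=-pz+ie^{i\alpha}sz,
\end{equation*}
we get the pointwise identity
\begin{equation*}
\left|e^{-\omega z}\right|=\left|e^{-pz}\right|\cdot e^{s\,Re\left(ie^{i\alpha}z\right)},\quad\text{for }\omega=p-ie^{i\alpha}s.
\end{equation*}

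Next I would check that the exponent decays, i.e. that $Re\left(ie^{i\alpha}z\right)<0$ for $z\in\Delta$; this is precisely the computation already performed in the paragraph preceding \eqref{cauchy_to_fourier_1}. Indeed, for $z\in\Delta$ definition \eqref{s} gives $\arg z\in(-\alpha,\alpha)$, hence $\arg\left(ie^{i\alpha}z\right)=\pi/2+\alpha+\arg z\in(\pi/2,\pi/2+2\alpha)$, and by restriction \eqref{alpha} this interval is contained in $(\pi/2,3\pi/2)$, so $Re\left(ie^{i\alpha}z\right)<0$. This both guarantees absolute convergence of the integral and makes the final evaluation possible.

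Finally, substituting the pointwise identity and integrating in $s$ yields
\begin{equation*}
\int_{p-ie^{i\alpha}[0,+\infty)}\left|e^{-\omega z}\right||d\omega|=\left|e^{-pz}\right|\int_0^{+\infty}e^{s\,Re\left(ie^{i\alpha}z\right)}\,ds=\frac{-\left|e^{-pz}\right|}{Re\left(ie^{i\alpha}z\right)},
\end{equation*}
which is the claimed formula. I do not anticipate any real obstacle; the computation mirrors that of lemma \ref{g_estimate}, and the only point requiring care is the consistent bookkeeping of the sign of $Re\left(ie^{i\alpha}z\right)$ and of the orientation of the ray, both of which are forced by \eqref{s} and the standing hypothesis \eqref{alpha}.
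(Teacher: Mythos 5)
Your proposal is correct and follows essentially the same route as the paper: the paper's own proof simply states the evaluation of the integral and observes that the condition $-\alpha<\arg(z)<\pi-\alpha$ (equivalently $Re\left(ie^{i\alpha}z\right)<0$) holds throughout $\Delta$ by \eqref{alpha}, which is exactly the computation you carry out explicitly via the parameterization $\omega=p-ie^{i\alpha}s$. Your version just fills in the elementary details the paper leaves implicit.
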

\begin{proof}
We evaluate
\begin{equation*}
 \int_{p-ie^{i\alpha}[0,+\infty)}
\left|e^{-\omega z}\right|
\left|d\omega\right|=\frac{-\left|e^{-pz}\right|}{Re\left(ie^{i\alpha}z\right)}<+\infty,\quad\text{for }-\alpha<\arg(z)<\pi-\alpha.
\end{equation*}
At the same time, due to \eqref{alpha}, condition $-\alpha<\arg(z)<\pi-\alpha$ holds for all $z\in \Delta.$
\end{proof}
By lemmas \ref{g_estimate} and \ref{lemma_fubini}, the  condition of Fubini's theorem for changing the order of integration in the first term of \eqref{f_prefubini} holds. Similarly, one can check that the condition of Fubini's theorem for changing the order of integration in the second term of \eqref{f_prefubini} holds, too.
We apply Fubini's theorem to both terms of \eqref{f_prefubini} to get the following Fourier inversion formula:
\begin{align}\label{fourierinversion}
2\pi i f(z)=-
\int_{p-ie^{i\alpha}[0,+\infty)}
\left(
\int_{e^{-i\alpha}[0,+\infty)}
f(\zeta)
e^{\omega \zeta}
d\zeta
\right)
e^{-\omega z}
d\omega
+
\nonumber\\
+
\int_{p+ie^{-i\alpha}[0,+\infty)}
\left(
\int_{e^{i\alpha}[0,+\infty)}
f(\zeta)
e^{\omega \zeta}
d\zeta
\right)
e^{-\omega z}
d\omega.
\end{align}
\section{ Laplace transform.}
We have the following estimate for the indicator:
\begin{equation*}
    I_f(\theta)=\limsup_{s\rightarrow +\infty}\frac{\ln\left|f\left(se^{i\theta}\right)\right|}{s}\overset{\eqref{f_extended}}{\leq}h,\quad\text{for }|\theta|\leq \alpha.
\end{equation*}
Hence the integral in \eqref{g_directional} is absolutely convergent, and the function $g_{\theta}$ is well-defined on the set $\Omega_{\theta}.$ Further, from Morera's theorem it follows that $g_{\theta}$ is holomorphic in $\Omega_{\theta},$
\begin{equation}\label{g_directionalholomorphic}
    g_{\theta}\in Hol\left(\Omega_{\theta}\right).
\end{equation}
\begin{lemma}\label{well-defined}
Under notations of theorem \ref{spw}, the concatenated Laplace transform $g$ is well-defined by formula \eqref{g}: that is, in \eqref{g} the choice of a particular direction $\theta$ satisfying condition $\omega\in \Omega_{\theta}$ is irrelevant. 
\end{lemma}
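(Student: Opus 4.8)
The plan is to prove that $g_{\theta_1}(\omega)=g_{\theta_2}(\omega)$ whenever $|\theta_1|,|\theta_2|\le\alpha$ and $\omega\in\Omega_{\theta_1}\cap\Omega_{\theta_2}$; this is precisely what it means for formula \eqref{g} to be unambiguous. Assume $\theta_1<\theta_2$, let $S\subset\bar\Delta$ be the closed sub-sector spanned by the rays $e^{i\theta_1}[0,+\infty)$ and $e^{i\theta_2}[0,+\infty)$, of opening $\theta_2-\theta_1\le 2\alpha<\pi$ by \eqref{alpha}, and put $F(\zeta):=f(\zeta)e^{\omega\zeta}$, which is analytic in the interior of $S$ and bounded near $0$ by \eqref{f_extended}. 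By \eqref{g_directional}, $2\pi i\,g_{\theta_j}(\omega)=\int_{e^{i\theta_j}[0,+\infty)}F(\zeta)\,d\zeta$, so I would deform the first contour onto the second inside $S$: apply Cauchy's theorem to $F$ over the truncated pie-slice $\{\zeta\in S\colon |\zeta|\le R\}$ to get $\int_0^R F(se^{i\theta_1})e^{i\theta_1}\,ds+\int_{C_R}F\,d\zeta-\int_0^R F(se^{i\theta_2})e^{i\theta_2}\,ds=0$, where $C_R:=\{R e^{i\phi}\colon\theta_1\le\phi\le\theta_2\}$, and let $R\to+\infty$. The claimed identity follows once we show $\int_{C_R}F\,d\zeta\to 0$, for which it suffices that $F$ decay exponentially on $S$.

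To obtain that decay I would argue in three steps. First, on each bounding ray $\arg\zeta=\theta_j$ the function $F$ decays exponentially: since $\Omega_{\theta_j}$ is open, the number $\delta_j:=-I_f(\theta_j)-Re(\omega e^{i\theta_j})$ is strictly positive, and combining this with the indicator estimate $|f(se^{i\theta_j})|\le C\,e^{(I_f(\theta_j)+\delta_j/2)s}$ (valid for all $s\ge 0$ after enlarging the constant, using that $f$ is bounded near $0$) gives $|F(se^{i\theta_j})|\le C\,e^{-\delta_j s/2}$. Second, \eqref{f_extended} yields $|F(\zeta)|\le C_\epsilon e^{(h+\epsilon+|\omega|)|\zeta|}$ on $S$, so $F$ is of exponential type in the sector $S$, whose opening is $<\pi$. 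Third, to propagate the boundary decay into the interior I would apply the Phragmen-Lindeloef principle to the auxiliary function $F_\eta(\zeta):=e^{\eta\zeta}F(\zeta)$ for a fixed small real $\eta\in(0,\tfrac12\min(\delta_1,\delta_2)]$: then $F_\eta$ is still bounded on both bounding rays (because $\eta\cos\theta_j\le\eta\le\delta_j/2$) and still of exponential type in $S$, so $|F_\eta|\le C$ on $S$; hence $|F(\zeta)|\le C\,e^{-\eta Re(\zeta)}\le C\,e^{-\eta\cos(\alpha)|\zeta|}$ on $S$, since $|\arg\zeta|\le\alpha<\pi/2$ there. Consequently $\left|\int_{C_R}F\,d\zeta\right|\le 2\alpha R\cdot C\,e^{-\eta\cos(\alpha)R}\to 0$, and the preceding display yields $\int_{e^{i\theta_1}[0,+\infty)}F\,d\zeta=\int_{e^{i\theta_2}[0,+\infty)}F\,d\zeta$, i.e.\ $g_{\theta_1}(\omega)=g_{\theta_2}(\omega)$.

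Two points deserve care. When $\theta_1=-\alpha$ or $\theta_2=\alpha$, one edge of $S$ lies on $\partial\Delta$, where $f$ is given only by its a.e.\ non-tangential boundary values (remark \ref{boundary}); the applicability of Cauchy's theorem and of the Phragmen-Lindeloef principle on $S$ is then justified exactly as for the Cauchy integral formula \eqref{bounded_cauchy_ntegral_formula} in Section 2, using that $f$ is bounded near every point of $\partial\Delta$. Also, the three-step decay argument invokes $\omega\in\Omega_{\theta_1}$ and $\omega\in\Omega_{\theta_2}$ only through the boundary decay on the two edges of $S$; no hypothesis on $\omega$ relative to intermediate directions is needed, which is why a single contour deformation over all of $S$ suffices. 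I expect the decay estimate — in particular getting the Phragmen-Lindeloef principle to upgrade boundary decay to interior decay via the multiplier $e^{\eta\zeta}$ — to be the main obstacle; the boundary-regularity point is routine given the work already done in Section 2.
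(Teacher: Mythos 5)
Your proposal is correct, but the engine of the argument differs from the paper's at the one point where the lemma is nontrivial: making the arc integral vanish for a \emph{given} $\omega\in\Omega_{\theta_1}\cap\Omega_{\theta_2}$. You fix an arbitrary such $\omega$ and, since $f(\zeta)e^{\omega\zeta}$ is only known to decay on the two bounding rays (not on intermediate ones), you upgrade that boundary decay to interior decay via Phragm\'en--Lindel\"of applied to $e^{\eta\zeta}f(\zeta)e^{\omega\zeta}$; this is a legitimate and fairly standard device, and it proves $g_{\theta_1}(\omega)=g_{\theta_2}(\omega)$ directly on all of $\Omega_{\theta_1}\cap\Omega_{\theta_2}$. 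The paper instead sidesteps the interior-decay issue entirely: it evaluates both transforms only at the special points $\omega(s)=-se^{-i(\theta_1+\theta_2)/2}$, for which $\mathrm{Re}\bigl(\omega(s)e^{i\theta}\bigr)\le -s\cos\alpha$ \emph{uniformly} over all intermediate directions $\theta\in[\theta_1,\theta_2]$, so that \eqref{f_extended} alone gives exponential decay of $f(\zeta)e^{\omega(s)\zeta}$ throughout the subsector for $s\gg1$; it then transfers the identity from this ray of $\omega$'s to the whole (connected) intersection $\Omega_{\theta_1}\cap\Omega_{\theta_2}$ by the uniqueness theorem for analytic functions, using the holomorphy \eqref{g_directionalholomorphic} established via Morera. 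What each buys: your route needs no identity-theorem step and no separate holomorphy argument, but it leans on a Phragm\'en--Lindel\"of principle whose boundary hypothesis, in the edge cases $\theta_j=\pm\alpha$, is only an a.e.\ bound on nontangential limits --- a slightly stronger boundary-value statement than the pointwise estimate \eqref{fomegazeta} the paper uses, though of the same nature as what remark \ref{boundary} already assumes for the Cauchy formula, so I would not call it a gap. Two small points to tidy up: handle the degenerate case $I_f(\theta_j)=-\infty$ (replace it by any finite bound when defining $\delta_j$), and note that the arc length is $(\theta_2-\theta_1)R$, which your bound $2\alpha R$ covers.
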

\begin{proof}
Let 
\begin{align}
\label{concatenated_theta}
&
-\alpha\leq \theta_1\leq \theta\leq \theta_2\leq\alpha,
\\
\label{concatenated_omega}
&
\omega(s)=-se^{-i\left(\theta_1+\theta_2\right)/2},
\\
\label{concatenated_s}
&
s\geq 0.
\end{align}
We estimate
\begin{align}\label{concaten}
&
    Re\left(\omega(s) e^{i\theta}\right)
    \overset{\eqref{concatenated_omega},\eqref{concatenated_s}}{=}
    -s  Re\left( e^{i(\theta-\theta_2/2-\theta_1/2)}\right)=
    \\\nonumber &
    -s  \cos(\theta-\theta_2/2-\theta_1/2)
    \overset{\eqref{concatenated_s},\eqref{concatenated_theta},\eqref{alpha}}{\leq}
    -s\cos\left(\alpha\right).
\end{align}
By taking $\theta=\theta_1$ and $\theta=\theta_2$ in the estimate \eqref{concaten}, due to definition \eqref{omega_directional} we have
\begin{equation*}
\omega(s)\in \Omega_{\theta_1}\cap \Omega_{\theta_2} ,\quad \text{for }s>>1.
\end{equation*}
For $ \arg(\zeta)\in \left[\theta_1,\theta_2\right]$ we estimate
\begin{align}\label{fomegazeta}
&
    \left|f(\zeta)e^{\omega(s)\zeta}\right|
    \overset{\eqref{f_extended}}{\leq}
    C_{\epsilon}e^{(h+\epsilon)|\zeta|+Re(\omega(s)\zeta)}=
    \\\nonumber &
=
      C_{\epsilon}e^{
      \left(
      h+\epsilon+Re\left(\omega(s)e^{i\arg(\zeta)}\right)
      \right)
      |\zeta|
      }
\overset{\eqref{concaten}}{\leq}
    C_{\epsilon}e^{
      \left(
      h+\epsilon-s\cos(\alpha)
      \right)
      |\zeta|
      }.
           \end{align}
      For $s>>1$ the coefficient       $h+\epsilon-s\cos(\alpha)$ in front of $|\zeta|$ in the last exponent of \eqref{fomegazeta} is negative.
Consequently, the Phragmen-Lindeloef maximum principle applies to the Cauchy integral theorem for the function $\zeta\rightarrow f(\zeta)e^{\omega(s)\zeta}$ in the sector $\lbrace \zeta\in \mathbb{C}\colon \arg(\zeta)\in \left[\theta_1,\theta_2\right]\rbrace,$ 
\begin{equation*}
\int_{e^{i\theta_1}[0,+\infty)}
f(\zeta)
e^{\omega(s) \zeta}
d\zeta=
\int_{e^{i\theta_2}[0,+\infty)}
f(\zeta)
e^{\omega(s) \zeta}
d\zeta.
\end{equation*}
Or equivalently
\begin{equation}\label{g_well_defined}
    g_{\theta_1}(\omega(s))=g_{\theta_2}(\omega(s)),\quad s>>1.
\end{equation}
Hence by uniqueness of analytic functions,
\begin{equation}\label{gwelldefined}
      g_{\theta_1}(\omega)=g_{\theta_2}(\omega),\quad \omega\in \Omega_{\theta_1}\cap \Omega_{\theta_2}.
\end{equation}
\end{proof}
Due to claim \eqref{g_directionalholomorphic} and lemma \ref{well-defined}, we have
\begin{equation}\label{gholomorphic}
    g\in Hol(\Omega).
\end{equation}
\begin{remark}\label{fasg}
We can rewrite the Fourier inversion formula \eqref{fourierinversion} in terms of the concatenated Laplace transform $g$ (defined by \eqref{g} and \eqref{g_directional})) and in terms of the curve $\Gamma$ (parametrized by \eqref{gamma}) as formula \eqref{morimoto}.
\end{remark}
\begin{remark}\label{g_estimate_all}
By \eqref{gamma} the curve $\Gamma$ consists of two subsets: $p-ie^{i\alpha}[0,+\infty)$ and
$p+ie^{i\alpha}[0,+\infty).$
Lemma \ref{g_estimate} provides an estimate on the function $g$ along the subset $p-ie^{i\alpha}[0,+\infty).$
One can prove a similar estimate for the other subset of $\Gamma$ in order to obtain the following unified estimate:
\begin{equation*}
|g(\omega)|
\leq
\frac{-C_{\epsilon}}{h+\epsilon+p\cos(\alpha)}
    ,\quad \text{for }\omega\in \Gamma.
\end{equation*}
\end{remark}
\section{ Domain of analyticity.}\label{nda_proof}
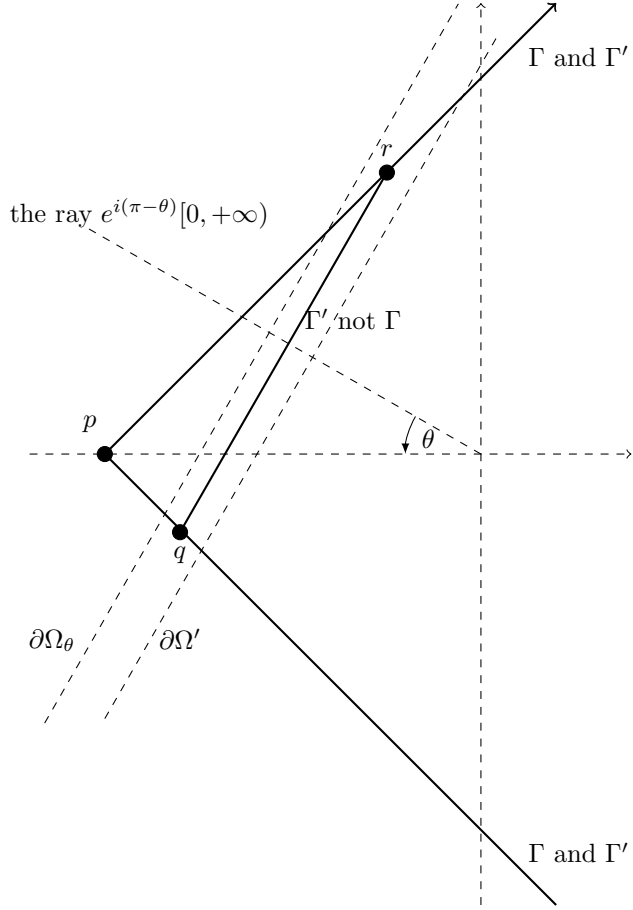
\begin{figure}\label{fig_gamma_prime}
\begin{tikzpicture}[scale=1]
\draw[dashed,->] (-6,0) -- (2,0); 
\draw[dashed,->] (0,-6) -- (0,6); 
\draw[fill=black] (-5,0) circle (0.1cm); 
\draw[fill=black] (-8+10*0.4,-8+17.4*0.4) circle (0.1cm); 
\draw[fill=black] (-8+10*0.675,-8+17.4*0.675) circle (0.1cm); 
\node at (-8+10*0.4,-8+17.4*0.4-0.3) {$q$}; 
\node at (-8+10*0.675,-8+17.4*0.675+0.3){$r$}; 
\draw[thick,->] (-5,0) -- (1,6);
\draw[thick] (-5,0) -- (1,-6);
\node at (-5.2,0.4) {$p$}; 
\node at (1.3,5.3) {$\Gamma$ and $\Gamma^{\prime}$};
\node at (1.3,-5.3) {$\Gamma$ and $\Gamma^{\prime}$};
\draw[dashed](0,0)-- (-0.87*6,0.5*6);
\node at (-0.87*5.8+0.5,0.5*5.8+0.3){the ray $e^{i(\pi-\theta)}[0,+\infty)$};
\draw[-latex] (-0.87,0.5) arc[start angle=150, end angle=180,radius=1cm] ;
\node at (-0.7,0.2) {$\theta$};
\node at (-1.7,1.8) {$\Gamma^{\prime}$ not $\Gamma$};
\draw[dashed](-8.8+10*.3,-8.8+17.4*.3)-- (-8.8+10*.85,-8.8+17.4*.85);
\node at (-9+10*.3+0.3,-9+17.4*.3+1.3){$\partial\Omega_{\theta}$}; 
\draw[dashed](-7+10*.2,-7+17.4*.2)-- (-7+10*.72,-7+17.4*.72);
\node at (-6+10*.12+0.8,-5+17.4*.12+0.45){$\partial\Omega^{\prime}$}; \draw[thick](-8+10*0.4,-8+17.4*0.4)-- (-8+10*0.675,-8+17.4*0.675);
\end{tikzpicture}
\caption{ The curve $\Gamma^{\prime}.$}
\end{figure}
Under assumptions of theorem \ref{nda}, suppose that the function $g$ may be analytically continued to a half-plane
$\Omega^{\prime}$ that contains the half-plane $\Omega_{\theta}$ properly,
$\Omega_{\theta}\subsetneq \Omega^{\prime}.$ We will show that this supposition leads to contradiction \eqref{contradiction}.

Indeed, on this supposition we can define the curve $\Gamma^{\prime}$ as in illustration \ref{fig_gamma_prime} on page \pageref{fig_gamma_prime}. That is, the difference between curves $\Gamma$ and $\Gamma^{\prime}$ is that the curve $\Gamma^{\prime}$ passes directly from point $q$ to point $r$ instead of passing from $q$ to $p$ and then from $p$ to $r,$ like the curve $\Gamma$ does.

The curve $\Gamma^{\prime}$ is bounded away from the half space $\Omega_{\theta}.$ Hence the set 
\begin{equation*}
\lbrace Re\left(\omega e^{i\theta}\right) \colon \omega\in\Gamma^{\prime}\rbrace,
\end{equation*}
being the projection of the curve $\Gamma^{\prime}$ onto the ray $e^{i(\pi-\theta)}[0,+\infty),$ is bounded away from $-I_f(\theta),$
  \begin{equation}\label{gammaprime}
  \inf_{\omega\in\Gamma^{\prime}}Re\left(\omega e^{i\theta}\right)>    -I_f(\theta).
  \end{equation}
  Inequality \eqref{gammaprime} that may not hold for the original curve $\Gamma.$
  
Since in theorem \ref{nda} the function $g$ is assumed to be analytic in the whole half-plane $\Omega^{\prime},$ the Cauchy integral theorem applies in $\Omega^{\prime},$ and consequently the Fourier inversion formula \eqref{morimoto} stays valid if in that formula we substitute the curve $\Gamma$ by the curve $\Gamma^{\prime},$  
 \begin{equation*}
    f(z)=\int_{\Gamma^{\prime}}g(\omega)e^{-\omega z }d\omega,\quad z\in \Delta.
\end{equation*}
We split the curve $\Gamma^{\prime}$ into three subcurves: $[q,r],$ $r+ie^{-i\alpha}[0,+\infty)$ and
$q-ie^{i\alpha}[0,+\infty),$
and estimate
\begin{align}\label{j123}
&
    \left|f\left(se^{i\theta}\right)\right|
    \leq
    \int_{\Gamma^{\prime}}
    \left|e^{-se^{i\theta} \omega}\right|
    \left|g(\omega)\right|
    |d\omega|
    =
    \int_{[q,r]}
    \left|e^{-se^{i\theta} \omega}\right|
    \left|g(\omega)\right|
    |d\omega|
    +
    \\\nonumber
    &
    +
    \int_{r+ie^{-i\alpha}[0,+\infty)}
    \left|e^{-se^{i\theta} \omega}\right|
    \left|g(\omega)\right|
    |d\omega|
     +
    \int_{q-ie^{i\alpha}[0,+\infty)} 
    \left|e^{-se^{i\theta} \omega}\right|
    \left|g(\omega)\right|
    |d\omega|=
      \\\nonumber 
      &
      =J_1(s)+J_2(s)+J_3(s),
  \quad\text{for }s> 0.
\end{align}
We further estimate
\begin{equation*}
 J_1(s)\leq  
 e^{-s\inf_{\omega\in\Gamma^{\prime}}Re\left(\omega e^{i\theta}\right)}
    \int_{[q,r]}
       \left|g(\omega)\right|
    |d\omega|,  \quad\text{for }s> 0.
\end{equation*}
So that
\begin{equation}\label{j1}
\limsup_{s\rightarrow+\infty}\frac{\ln\left|J_1(s)\right|}{s}\leq -\inf_{\omega\in\Gamma^{\prime}}Re\left(\omega e^{i\theta}\right).
\end{equation}
We also estimate
\begin{align*}&
      J_3(s)
    \overset{\ref{g_estimate_all}}{\leq}
    \frac{-C_{\epsilon}}{h+\epsilon+p\cos(\alpha)}
     \int_{q-ie^{i\alpha}[0,+\infty)} 
            \left|e^{\omega\left(\zeta-se^{i\theta}\right)}\right|
    |d\omega|
    \leq
    \\\nonumber &
    \leq
    \frac{-C_{\epsilon}}{h+\epsilon+p\cos(\alpha)}\frac{- \left|e^{-qse^{i\theta}}\right|}{Re\left(ie^{i\alpha}se^{i\theta}\right)},  \quad\text{for }s> 0
\end{align*}
(The last inequality is by repeating the arguments of lemma \ref{lemma_fubini} for $q$ instead of $p$).

So that
\begin{equation}\label{j3}
    \limsup_{s\rightarrow+\infty}\frac{J_3(s)}{s}\leq -Re\left(q e^{i\theta}\right)=-\inf_{\omega\in\Gamma^{\prime}}Re\left(\omega e^{i\theta}\right).
\end{equation}
Similarly, one can prove that
\begin{equation}\label{j2}
    \limsup_{s\rightarrow+\infty}\frac{J_2(s)}{s}\leq -Re\left(r e^{i\theta}\right)=-\inf_{\omega\in\Gamma^{\prime}}Re\left(\omega e^{i\theta}\right).
\end{equation}
Using the estimates \eqref{j1},\eqref{j2},\eqref{j3} in \eqref{j123}, we get
\begin{equation}\label{contradiction}
    I_f(\theta)\leq  -\inf_{\omega\in \Gamma^{\prime}
    }Re\left(\omega e^{i\theta}\right)\overset{\eqref{gammaprime}}{<}I_f(\theta).
\end{equation}
A contradiction.


\begin{thebibliography}{Xyz12}
\bibitem{ASP}
M. Andersson, R. Sigurdsson, M. Passare.
``Complex Convexity and Analytic Functionals."
\textit{Progress in Mathematics}
vol. 225,
Basel: Birkhäuser
(2004).
\bibitem{B}
E. Borel.
``Leçons sur les séries divergentes."
2nd ed. 
Paris: Gauthier-Villars
(1928).
\bibitem{DA}
M. Dzhrbashyan, A. Avetisyan. 
``Integral representation of a certain class of functions analytic in an angular domain.”
\textit{Sib. Math. J.}
1, no. 3 
(1960):383–426.
\bibitem{L} A. Leont'ev. 
``Entire Functions. Exponential Series."
Moscow: Nauka
(1983).
\bibitem{LLST}
B. Levin, Yu. Lyubarskii, M. Sodin, V. Tkachenko.
``Lectures On Entire Functions."
\textit{Translations Of Mathematical: Monographs American Mathematical Society}
(1996).
\bibitem{MV}
A. Mkrtchyan, A. Vagharshakyan.
``Trigonometric convexity for the multidimensional indicator after Ivanov,"
arXiv:****.*****
(2022).
\bibitem{M}
M. Morimoto. 
``Analytic functionals with non-compact carrier." 
\textit{Tokyo J.Math.}
1 
(1978): 77-103.
\bibitem{PW}
R. Paley, N. Wiener.
``Fourier transforms in the complex domain."
\textit{AMS Colloquium Publucations}
9 
(1934).
\bibitem{P}
G. Polya.
``Untersuchungen über Lücken und Singularitäten von Potenzreihen."
\textit{Math. Z.}
29 
(1929): 549–640. 
\bibitem{V}
A. Vagharshakyan.
``A refinement of Carlson's theorem,"
arXiv:2108.12846
(2022).
\bibitem{YS}
K. Yoshino, M. Suwa,
``Plana's summation formula for holomorphic functions of exponential type."
\textit{RIMS Koukyuuroku}
(2000): 180-189.
\end{thebibliography}
\end{document}